\newif\ifpdf
\numberwithin{equation}{section}       % Number formulas within sections
 \theoremstyle{plain}    
 \newtheorem{thm}{Theorem}[section]
 \numberwithin{equation}{section} %% Comment out for sequentially-numbered
 \numberwithin{figure}{section} %% Comment out for sequentially-numbered
 \theoremstyle{plain}
 \theoremstyle{plain}    
 \theoremstyle{plain}    
 \newtheorem{pro}[thm]{Proposition} %%Delete [thm] to re-start numbering
 \theoremstyle{plain}    
 \newtheorem{lem}[thm]{Lemma} %%Delete [thm] to re-start numbering
 \theoremstyle{remark}
 \newtheorem{rem}[thm]{Remark}
 \theoremstyle{definition}
\newtheorem*{thmM}{Main Theorem}
\newtheorem*{properties}{Properties}
\theoremstyle{definition}
\newtheorem{defi}[thm]{Definition}
\newtheorem*{ackn}{Acknowledgement}
\newcommand{\Q}{{\mathbb{Q}}}
\newcommand{\R}{{\mathbb{R}}}
\newcommand{\cE}{{\mathcal{E}}}
\renewcommand{\a}{\alpha}
\newcommand{\f}{\varphi}
\newcommand{\MA}{\mathrm{MA}\,}
\newcommand{\Amp}{\mathrm{Amp}\,}
\newcommand{\vol}{\operatorname{vol}}
\newcommand{\supp}{\operatorname{supp}}
\newcommand{\ay}{\alpha_{Y}}
\newcommand{\te}{\theta}
\begin{document}

\setcounter{tocdepth}{1}

\title[Divisorial Zariski Decomposition and full mass currents]{Divisorial Zariski Decomposition and some properties of full mass currents}

\date{\today}

\author{E.~Di Nezza}
\address{Department of Mathematics, Imperial College London, SW7 2AZ, UK}
\email{e.di-nezza@imperial.ac.uk}
\author{E.~Floris}
\address{Department of Mathematics, Imperial College London, SW7 2AZ, UK}
\email{e.floris@imperial.ac.uk}
\author{S.~Trapani}
\address{Dipartimento di Matematica, Universit\`a di Roma `Tor Vergata ', Rome, Italy}
\email{trapani@axp.mat.uniroma2.it}

\begin{abstract}
Let $\alpha$ be a big class on a compact K\"ahler manifold. We prove that a decomposition $\alpha=\alpha_1+\alpha_2$ into the sum of a modified nef class $\alpha_1$ and a pseudoeffective class $\alpha_2$ is the divisorial Zariski decomposition of $\alpha$ if and only if $\vol(\alpha)=\vol(\alpha_1)$. We deduce from this result some properties of full mass currents.
\end{abstract} 

\maketitle

\tableofcontents

\section*{Introduction}
The study of the Zariski decomposition started with the work of Zariski \cite{Zar} who defined it for an effective divisor in a smooth projective surface. Fujita extended the definition to the case of pseudoeffective divisors \cite{F}. 
Due to the importance of the Zariski decomposition for surfaces,
several generalizations to higher dimension exist (see \cite{ProkZar} for a survey of these constructions).
The divisorial Zariski decomposition for a cohomology class $\alpha$ on a K\"ahler manifold has been introduced by Boucksom in \cite{Bou2}. If $\alpha$ is the class of a divisor on a projective manifold, the divisorial Zariski decomposition coincides with the $\sigma$-decomposition introduced by Nakayama \cite{N}.
The divisorial Zariski decomposition is a decomposition $$\alpha=Z(\alpha)+\{N(\alpha)\}$$
into a ``positive part", the Zariski projection $Z(\alpha)$,
whose non-nef locus has codimension at least $2$,
and a ``negative part" $\{N(\alpha)\}$
which is the class of an effective divisor and is rigid.
The class $Z(\alpha)$ encodes some important information about $\alpha$:
$Z(\alpha)$ is big if and only if $\alpha$ is and $\vol(\alpha)=\vol(Z(\alpha))$.

In this note we give a criterion for a sum of two classes to be a divisorial Zariski decomposition.
Our main result is:
\begin{thmM}
Let $X$ be a compact K\"ahler manifold of complex dimension $n$. Let $\alpha$ be a big class on $X$. Let $\alpha_1\in H^{1,1}(X,\R)$ be a modified nef class and $\alpha_2\in H^{1,1}(X,\R)$ be a pseudoeffective class. Then $\alpha=\alpha_1+\alpha_2$ is the divisorial Zariski decomposition of $\alpha$ if and only if $\vol(\alpha)=\vol(\alpha_1)$.
\end{thmM}
\noindent The relations between the Zariski decomposition of numerical classes of cycles on a projective variety and their volume have been largely studied recently in a series of papers \cite{L}, \cite{FL}, \cite{FKL}. 
The Main Theorem also goes in this direction: for instance, if $X$ is projective and $\alpha=\{D\}$ is the class of a big divisor, we recover \cite[Proposition 5.3]{FL} for cycles of codimension $1$.

Our proof relies deeply on a result of existence and uniqueness of weak solutions of complex Monge-Amp\`ere equations.\\
On the other hand the proof in \cite{FL} uses the differentiability of the volume function  $f(t)=\vol(\alpha+t\{D\})$, known at the moment to be true only in the algebraic case. In Remark \ref{zariskiproj} we present a proof
of the Main Theorem using the differentiability of the volume. 
As it is proved by Xiao \cite[Proposition 1.1]{X} , the differentiability of the volume is equivalent to the following quantitative version of a Demailly's conjecture \cite[Conjecture 10.1]{BDPP}, that states as follows:\\
\emph{Let $X$ be a compact K\"ahler manifold of complex dimension $n$, and let $\alpha, \beta\in H^{1,1}(X, \R)$ be two nef classes. Then we have}
\begin{equation}\label{conj}
\vol(\alpha-\beta)\geq \alpha^n - n \,\alpha^{n-1}\cdot \beta.
\end{equation}
\vspace{2mm}

We then show that the Main Theorem is strictly related to the invariance of finite energy classes under bimeromorphic maps. More precisely, in Theorem \ref{ConditionV} we show that finite energy classes are inviariant under a bimeromorphic map if and only if the volumes are preserved. This extends to any dimension \cite[Proposition 2.5]{dn} where a similar statement is proved in dimension $2$ by the first named author using the Hodge index theorem.
\vspace{2mm}

We now give a brief outline of this note. Section $1$ reviews background material on the divisorial Zariski decomposition and currents with full Monge-Amp\`ere mass. In Section $2$ we prove the Main Theorem and in Section $3$ we give some applications to full mass currents. In particular we prove Theorem \ref{ConditionV}. 
\begin{ackn}
We would like to thank S\'ebastien Boucksom for several useful discussions on the subject and for communicating us the proof in Remark \ref{zariskiproj}.
\end{ackn}

\section{Preliminaries}
\noindent Let $(X,\omega)$ be a compact K{\"a}hler manifold of complex dimension $n$ and let $\a\in H^{1,1}(X,\R)$ be a real $(1,1)$-cohomology class.
Recall that $\a$ is said to be \emph{pseudo-effective} if it can be represented by a closed positive $(1,1)$-current $T$; $\alpha$ is  \emph{nef} if and only if, for any $\varepsilon>0$ there exists a smooth form $\theta_\varepsilon\in \alpha$ such that $\theta_\varepsilon\geq -\varepsilon\omega$; $\alpha$ is \emph{big} if and only if it can be represented by a \emph{K{\"a}hler current}, i.e. if and only if there exists a positive closed $(1,1)$-current $T\in\a $ such that $T\geq \varepsilon\,\omega$ for some $\varepsilon>0$ and $\alpha$ is a K\"ahler class if and only if it contains a K\"ahler form.

\vspace{2mm}
Given a smooth representative $\te$ of the class $\a$, it follows from $\partial\bar{\partial}$-lemma that any positive $(1,1)$-current can be written as $T=\te+dd^c \f$ where the global potential $\f$ is a \emph{$\te$-psh function}, i.e. $\te+dd^c\f\geq 0$. Here, $d$ and $d^c$ are real differential operators defined as
$$d:=\partial +\bar{\partial},\qquad d^c:=\frac{i}{2\pi}\left(\bar{\partial}-\partial \right).$$

\vspace{2mm}
Let $T$ be a closed positive $(1,1)$-current. We denote by $\nu(T,x)$ (resp. $\nu(T,D)$) its Lelong number at a point $x\in X$ (resp. along a prime divisor $D$). We refer the reader to \cite{Dem} for the definition.\\
There is a unique decomposition of $T$ as a weakly convergent series
$$T =R+ \sum_j \lambda_j [D_j]$$
where:
\begin{itemize}
\item[(i)] $[D_j]$ is the current of integration over the prime divisor $D_j\subset X$, 
\item[(ii)]$\lambda_j :=\nu(T,D_j)\geq 0$,
\item[(iii)]$R$ is a closed positive current with the property that ${\rm codim} E_c(R) \geq 2$ for every $c > 0$. 
\end{itemize}
Recall that 
$$E_c(R):=\{x\in X\;:\; \nu(R,x)\geq c\}$$ and that this is an analytic subset of $X$ by a famous result due to Siu \cite{Siu}.\\

\noindent Such a decomposition is called the Siu decomposition of $T$.

\subsubsection{Analytic and minimal singularities}
 
A positive current $T=\theta+dd^c\f$ is said to have \emph{analytic singularities} if there exists $c>0$ such that locally on $X$,
$$
\f=\frac{c}{2}\log\sum_{j=1}^{N}|f_j|^2+u,
$$
where $u$ is smooth and $f_1,\ldots,f_N$ are local holomorphic functions.
\vspace{2mm}

If $T$ and $T'$ are two closed positive currents on $X$, then $T'$ is said to be \emph{less singular} than $T$ if their local potentials satisfy $\f\le\f'+O(1)$.\\
A positive current $T$ is said to have \emph{minimal singularities} (inside its cohomology class  $\a$) if it is less singular than any other positive current in $\a$. Its $\theta$-psh potentials $\f$ will correspondingly be said to have minimal singularities.

Such $\theta$-psh functions with minimal singularities always exist, one can consider for example
$$V_\theta:=\sup\left\{ \f\,\,\theta\text{-psh}, \f\le 0\text{ on } X \right \}.$$

\subsection{Big and Modified nef classes}
\begin{defi}\label{ample}
{\it If $\a$ is a big class, we define its \emph{ample locus} $\Amp(\a)$ as the set of points $x\in X$ such that there exists a K\"ahler current $T\in\a$ with analytic singularities and smooth in a neighbourhood of $x$.}
\end{defi}

The ample locus $\Amp(\a)$ is a Zariski open subset, and it is nonempty thanks to Demailly's regularization result (see \cite{Bou2}).

\vspace{2mm}
Observe that a current with minimal singularities $T_{\min}\in \alpha$ has locally bounded potential in $\Amp(\alpha)$.
\begin{defi}
{\it Let $\alpha$ be a big class.
\begin{enumerate}
\item Let $T\in\alpha$ be a positive $(1,1)$-current, then we set $$E_+(T):= \{x\in X\;:\; \nu(T,x)>0\}.$$
\item We define the \emph{non K\"ahler locus} of $\alpha$ as $$E_{nk}(\a):= \bigcap_T E_+(T)$$ ranging among all the K\"ahler currents in $\alpha$.
\end{enumerate}
}
\end{defi}
\noindent 
By \cite[Theorem 3.17(iii)]{Bou2} a class $\alpha$ is K\"ahler if and only if $E_{nk}(\alpha)= \emptyset$. Moreover by \cite[Theorem 3.17(ii)]{Bou2} we have $E_{nk}(\alpha)=X\setminus \Amp(\alpha)$.

\begin{defi}
{\it We say that $\alpha$ is \emph{modified nef} if and only if for every $\varepsilon>0$ there exists a closed $(1,1)$-current $T_\varepsilon\in \alpha$ with $T_\varepsilon\geq -\varepsilon \omega$ and $\nu(T_\varepsilon,D)=0$ for any prime divisor $D$.
}
\end{defi}
We recall now an alternative and useful defintion of modified nef classes.
\begin{pro}\cite[Proposition 3.2]{Bou2}\label{bou2}
Let $\alpha\in H^{1,1}(X, \R)$ be a pseudoeffective class. Then $\alpha$ is modified nef if and only if $\nu(\alpha, D)=0$ for every prime divisor $D$.
\end{pro}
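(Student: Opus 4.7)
My strategy is to argue both directions through a minimal-singularities representative $T_{\min}\in\a$, which is a genuine positive current since $\a$ is pseudoeffective (one may take $T_{\min}=\te+dd^c V_\te$ for any smooth representative $\te\in\a$). The Siu decomposition of $T_{\min}$ plays the central role. The minimal multiplicity is to be understood in Boucksom's sense as $\nu(\a, D):=\sup_{\e>0}\inf\{\nu(T, D)\,:\,T\in\a,\ T\geq -\e\omega\}$, and for a pseudoeffective class this coincides with the generic Lelong number $\nu(T_{\min}, D)$ of any minimal representative; this identification is the key analytic input.

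The forward direction is then essentially tautological: if $\a$ is modified nef, the defining currents $T_\e\in\a$ with $T_\e\geq -\e\omega$ and $\nu(T_\e, D)=0$ witness that the inner infimum in the definition of $\nu(\a, D)$ vanishes for every $\e>0$, so $\nu(\a, D)=0$. For the reverse direction, assume $\a$ is pseudoeffective with $\nu(\a, D)=0$ for every prime divisor $D$. Writing the Siu decomposition $T_{\min}=R+\sum_j\lambda_j[D_j]$, one has $\lambda_j=\nu(T_{\min}, D_j)=\nu(\a, D_j)=0$ by the identification above, so $T_{\min}=R$. The codimension property $\mathrm{codim}\,E_c(R)\geq 2$ then forces $\nu(R, D)=0$ for every prime divisor $D$, for otherwise $D$ would be a codimension-one subvariety contained in the closed set $E_{\nu(R,D)}(R)$ of codimension at least two, a contradiction. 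Hence $T_{\min}$ itself is a positive current in $\a$ with no divisorial Lelong numbers, and may be taken as $T_\e$ for every $\e>0$, proving that $\a$ is modified nef.

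The main obstacle is the identification $\nu(\a, D)=\nu(T_{\min}, D)$, which is the one genuinely analytic step of the argument. The inequality $\nu(\a, D)\leq\nu(T_{\min}, D)$ is immediate: $T_{\min}\geq 0$ is a legitimate competitor in the defining infimum for every $\e>0$. For the reverse inequality, given $T\in\a$ with $T\geq -\e\omega$, the positive current $T+\e\omega$ represents the perturbed class $\a+\e\{\omega\}$ with the same Lelong numbers as $T$; comparing it with a minimal representative of that perturbed class and passing to the limit $\e\to 0$, using upper semicontinuity of Lelong numbers under such cohomological perturbations, yields $\nu(T_{\min}, D)\leq\nu(T, D)$ in the limit. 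All remaining steps are formal manipulation of the Siu decomposition and the definitions.
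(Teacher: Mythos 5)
Your forward direction is fine (it is indeed tautological from the definitions), but the reverse direction rests on the identification $\nu(\alpha,D)=\nu(T_{\min},D)$ for an arbitrary pseudoeffective class, and this is false: it is precisely the content of Proposition \ref{bou6ii}, which requires $\alpha$ to be \emph{big}. For a class on the boundary of the pseudoeffective cone the positive current with minimal singularities can be strictly more singular than the minimal multiplicity detects. The standard example of Demailly--Peternell--Schneider (a ruled surface over an elliptic curve) gives a \emph{nef} class $\alpha$ whose only positive representative is the integration current $[C]$ on a curve $C$; there $\nu(\alpha,C)=0$ (as $\alpha$ is nef) while $\nu(T_{\min},C)=1$. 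This also shows that your argument proves too much: it would conclude that any class with vanishing minimal multiplicities contains a \emph{positive} current with no divisorial Lelong numbers, which this example contradicts --- the definition of modified nef deliberately only asks for currents $T_{\varepsilon}\geq-\varepsilon\omega$. Your sketched limiting argument cannot close this gap: Lelong numbers are upper semicontinuous under weak convergence, so as $\varepsilon\to 0$ they can only jump up; for a weak limit $S\in\alpha$ of the $\varepsilon$-minimal currents one gets $\nu(S,D)\geq\limsup_{\varepsilon}\nu(T_{\min,\varepsilon},D)$ and $\nu(T_{\min},D)\leq\nu(S,D)$, which is the wrong direction for the inequality you need.

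The correct route (this is how the statement is proved in \cite{Bou2}; the present paper only quotes it) is to work at each fixed $\varepsilon>0$ with the current $T_{\min,\varepsilon}$ having minimal singularities within the family $\{T\in\alpha:\ T\geq-\varepsilon\omega\}$, rather than with the positive minimal current $T_{\min}$. For each fixed $\varepsilon$ one has $\inf\{\nu(T,D):\ T\in\alpha,\ T\geq-\varepsilon\omega\}=\nu(T_{\min,\varepsilon},D)$, and crucially this infimum is attained by the \emph{same} current for all prime divisors $D$ simultaneously. Hence $\nu(\alpha,D)=0$ for every $D$ yields, for each $\varepsilon>0$, a single current $T_{\varepsilon}:=T_{\min,\varepsilon}\in\alpha$ with $T_{\varepsilon}\geq-\varepsilon\omega$ and $\nu(T_{\varepsilon},D)=0$ for every prime divisor $D$, which is exactly the definition of modified nef used in the paper. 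Your Siu-decomposition bookkeeping is correct but becomes unnecessary once the argument is set up at the level of the $\varepsilon$-relaxed minimal currents; the identification you assumed is only available (and is used in the paper) under the bigness hypothesis.
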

\noindent We refer to \cite{Bou2} for the defintion and properties of the \emph{minimal multiplicity} $\nu(\alpha,D)$.
%When $\alpha$ is a big class, the minimal multiplicities can be computed on a current with minimal singularities:

%\begin{pro}\cite[Proposition 3.6(ii)]{Bou2}\label{bou6ii}
%Let $\alpha$ be a big class and let $T_{\min}\in \alpha$ be a current with minimal singularities. Then for any prime divisor $D$, we have
%$$\nu(\alpha,D)=\nu(T_{\min}, D).$$
%\end{pro}

\subsection{The Divisorial Zariski decomposition}
In this subsection we collect some basic results on the divisorial Zariski decomposition defined in \cite{Bou2}. They can all be found in \cite{Bou2} but we recall some statements widely used in this note.\\
Let $\alpha\in H^{1,1}(X, \R)$ be a pseudo-effective class. The \emph{divisorial Zariski decomposition} of $\alpha$ is defined as follows:
\begin{defi}
{\it  The negative part of $\alpha$ is defined as $N(\alpha):= \sum \nu(\alpha, D) [D]$, where $D$ are prime divisors. The Zariski projection of $\alpha$ is $Z(\alpha):= \alpha - \{N(\alpha)\}$. We call the decomposition $\alpha= Z(\alpha)+ \{N(\alpha)\}$ the divisorial Zariski decomposition of $\alpha$.
}
\end{defi}
\begin{properties}\label{properties}
Let $\alpha= Z(\alpha)+ \{N(\alpha)\}$ be the divisorial Zariski decomposition of $\alpha$. Then:
\begin{enumerate}
\item The class $Z(\alpha)$ is modified nef \cite[Proposition 3.8]{Bou2}. 
\item  $N(\alpha)$ is a divisor, i.e. there is a finite number of prime divisors $D$ such that $\nu(\alpha,D)>0$ \cite[Proposition 3.12]{Bou2}.
\item The set of modified nef classes is a closed convex cone and it is the closure of the convex cone generated by the classes $\mu_\star \tilde{\alpha}$ where $\mu: \tilde{X}\rightarrow X$ is a modification and $\tilde{\alpha}$ is a K\"ahler class on $\tilde{X}$ \cite[Proposition 2.3]{Bou2}. 
\item The negative part $\{N(\alpha)\}$ is a \emph{rigid} class, i.e. it contains only one positive current \cite[Proposition 3.13]{Bou2}.
\item Let $\alpha$ be a modified nef and big class, $D_1,\ldots, D_k$ be prime divisors and $\lambda_1,\ldots,\lambda_k\in \R^+$. Then \cite[Proposition 3.18]{Bou2}
$$N(\alpha+\sum_i \lambda_i \{D_i\})=\sum_i \lambda_i [D_i]$$
if and only if $D_j\subset E_{nk}(\alpha)$ for any $j$.
\end{enumerate}
\end{properties}

\begin{pro}\cite[Proposition 3.6(ii)]{Bou2}\label{bou6ii}
Let $\alpha\in H^{1,1}(X, \R)$ be a big class and let $T_{\min}\in \alpha$ be a current with minimal singularities. Consider the Siu decomposition of $T_{\min}$,
$$T_{\min}=R+\sum_j a_j [D_j]$$ where $a_j=\nu(T_{\min}, D_j)$. Then $\{R\}=Z(\alpha)$ and $\{\sum_j a_j D_j\}= \{N(\alpha)\}$. In particular, $\nu(\alpha, D)=\nu(T_{\min}, D)$ for any prime divisor $D$.
\end{pro}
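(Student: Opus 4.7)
The plan is to reduce everything to the single identity
\[
\nu(\alpha, D) = \nu(T_{\min}, D)
\]
for every prime divisor $D\subset X$. Once this is established, the two statements about $\{R\}$ and $\{N(\alpha)\}$ fall out by passing to cohomology classes in the Siu decomposition and invoking the finiteness Property (2) above.

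To prove $\nu(T_{\min}, D) \leq \nu(\alpha, D)$, I would use the defining feature of minimal singularities: writing $T_{\min}=\theta+dd^c\f_{\min}$ and $T=\theta+dd^c\f$ for an arbitrary positive current $T\in\alpha$, one has $\f_{\min}\geq \f - C_T$ globally on $X$. Since a less singular potential has smaller Lelong number at every point, this gives $\nu(T_{\min}, x)\leq \nu(T, x)$ pointwise, and in particular $\nu(T_{\min}, D)\leq \nu(T, D)$ for the generic Lelong number along $D$. Taking the infimum over all positive $T\in\alpha$ yields $\nu(T_{\min}, D)\leq \nu(\alpha, D)$, and the reverse inequality is automatic because $T_{\min}$ is itself a positive current in $\alpha$.

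With the identity in hand, Property (2) tells us that only finitely many prime divisors have positive minimal multiplicity, and these are precisely the $D_j$ occurring in the Siu decomposition of $T_{\min}$ with strictly positive coefficient $a_j=\nu(T_{\min}, D_j)=\nu(\alpha, D_j)$. Passing to cohomology classes in $T_{\min}=R+\sum_j a_j[D_j]$ then gives
\[
\alpha = \{R\} + \sum_j \nu(\alpha, D_j)\{D_j\} = \{R\} + \{N(\alpha)\},
\]
so $\{\sum_j a_j D_j\} = \{N(\alpha)\}$ and $\{R\}=\alpha-\{N(\alpha)\}=Z(\alpha)$.

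The main obstacle is the first step: reconciling Boucksom's definition of the minimal multiplicity $\nu(\alpha, D)$---originally phrased through a two-step regularization over currents $T\in\alpha$ with $T\geq -\varepsilon\omega$ and $\varepsilon\downarrow 0$---with the plain Lelong number along $D$ of a fixed minimal-singularities current. For a big class this reconciliation should be harmless: a K\"ahler current in $\alpha$ supplies honest positive representatives, and Demailly's regularization theorem allows one to remove the auxiliary $\varepsilon\omega$ at the cost of a controllable perturbation of Lelong numbers. Once this compatibility check is carried out, the remainder of the argument is pure bookkeeping with Siu decompositions.
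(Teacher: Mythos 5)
You should note at the outset that the paper offers no proof of this proposition: it is quoted verbatim from \cite[Proposition 3.6(ii)]{Bou2}. Your overall plan is nonetheless the right one, and it mirrors Boucksom's own development: establish $\nu(\alpha,D)=\nu(T_{\min},D)$ for every prime divisor $D$ (this is \cite[Proposition 3.6(i)]{Bou2}), and then the class identities follow by pure bookkeeping, since the equality of multiplicities shows $N(\alpha)=\sum_j a_j[D_j]$ with $a_j=\nu(T_{\min},D_j)=\nu(\alpha,D_j)$ (a finite sum), hence $\{\sum_j a_j D_j\}=\{N(\alpha)\}$ and $\{R\}=\alpha-\{N(\alpha)\}=Z(\alpha)$. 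The inequality $\nu(\alpha,D)\le\nu(T_{\min},D)$ and the final reduction are fine as you state them.

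The genuine gap is the step you defer as a ``compatibility check'': it is not harmless bookkeeping but the entire analytic content of the proposition, and the tool you suggest is not the one that closes it. By definition $\nu(\alpha,D)=\sup_{\varepsilon>0}\inf\{\nu(T,D)\,:\,T\in\alpha,\ T\ge-\varepsilon\omega\}$, so the infimum runs over \emph{almost positive} currents; your second paragraph takes the infimum only over positive currents in $\alpha$, which a priori yields a larger number, so it only proves $\nu(T_{\min},D)\le\inf_{T\ge 0,\,T\in\alpha}\nu(T,D)$ and not $\nu(T_{\min},D)\le\nu(\alpha,D)$. Demailly regularization does not bridge this: it trades singularities for a \emph{loss} of positivity rather than removing the $-\varepsilon\omega$. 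What does work, and is exactly where bigness enters, is a convex combination with a K\"ahler current: fix $T_+\in\alpha$ with $T_+\ge\delta\omega$, and for $T\in\alpha$ with $T\ge-\varepsilon\omega$ set $t=\varepsilon/(\varepsilon+\delta)$, so that $(1-t)T+tT_+\ge 0$ is a positive current in $\alpha$; since $T_{\min}$ is less singular than it, $\nu(T_{\min},D)\le(1-t)\nu(T,D)+t\nu(T_+,D)$. Taking the infimum over such $T$ and letting $\varepsilon\to0$ (hence $t\to0$) gives $\nu(T_{\min},D)\le\nu(\alpha,D)$. With this argument substituted for your appeal to regularization, the proof is complete; as written, the central inequality is asserted rather than proved.
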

%\begin{rem}
%Let $\alpha\in H^{1,1}(X, \R)$ be a big class and let $T_{\min}\in \alpha$ be a current with minimal singularities. Consider the Siu decomposition of $T_{\min}$,
%$$T_{\min}=R+\sum_j a_j [D_j]$$ where $a_j=\nu(T_{\min}, D_j)$. Then $\{R\}=Z(\alpha)$ and $\{\sum_j a_j D_j\}= \{N(\alpha)\}$. In particular, $\nu(\alpha, D)=\nu(T_{\min}, D)$ for any prime divisor $D$.
%\end{rem}

\subsection{Volume of big classes.}
Fix $\a\in H_{big}^{1,1}(X,\R)$. We introduce

\begin{defi}
\it{Let $T_{\min}$ be a current with minimal singularities in $\a$  and let $\Omega$ a Zariski open set on which the potentials of $T_{\min}$ are locally bounded, then 
\begin{equation}\label{v}
\vol(\a):=\int_{\Omega} T_{\min}^n>0
\end{equation}
is called the volume of $\a$.}
\end{defi}
\indent Note that the Monge-Amp{\`e}re measure of $T_{\min}$
%i.e. the top wedge product of $T_{\min}$ 
is well defined in $\Omega$ by \cite{bt} and that the volume is independent of the choice of $T_{\min}$ and $\Omega$ (\cite[Theorem 1.16]{begz}).
 
Let $f\colon X\rightarrow Y$ be a birational modification between compact K{\"a}hler manifolds and let $\ay\in H^{1,1}(Y,\R)$ be a big class. The volume is preserved by pull-backs, $$ \vol(f^{\star}\ay)=\vol(\ay) $$(see \cite{Bou1}). On the other hand, it is not preserved by push-forwards. In general we have $$\vol(f_\star \alpha_X)\geq \vol(\alpha_X)$$
(see Remark \ref{volpush}).
%\begin{ex}\label{esempio}
%Let $\pi: X\rightarrow \mathbb{P}^2$ be the blow-up along $\mathbb{P}^2$ at point $p$.	The class $\ax:=\{\pistar \omega_{FS}\}-\varepsilon \{E\}$ is K{\"a}hler whenever $0<\varepsilon<1$ and $\pistarb \ax=\{\omega_{FS}\}$. Now, $\vol(\ax)=1-{\varepsilon}^2$ while $\vol(\pistarb \ax)=1$.
%\end{ex}

\subsection{Full mass currents}
Fix $X$ a $n$-dimensional compact K{\"a}hler manifold, $\a\in H^{1,1}(X,\R)$ be a big class and $\theta\in \a$ a smooth representative.
\subsubsection{The non-pluripolar product}
%Following Bedford and Taylor (see \cite{bt1}), in \cite{begz} the authors have introduced the non-pluripolar products of globally defined currents that it is always well-defined on a compact K{\"a}hler manifold (\cite[Proposition 1.6]{begz}).
Let $T$ be a closed positive $(1,1)$-current. We denote by $\langle T^n\rangle$ the \emph{ non-pluripolar product} of $T$ defined in \cite[Proposition 1.6]{begz}.\\
Let us stress that since the non-pluripolar product does not charge pluripolar sets,
\begin{equation}\label{volTmin}
\vol(\a)=\int_X\langle T_{\min}^n\rangle
\end{equation}
whereas by \cite[Proposition 1.20]{begz} for any positive $(1,1)$-current $T\in \alpha$ we have 
\begin{equation}\label{volT}
\vol(\a)\geq \int_X\langle T^n\rangle.
\end{equation}
\begin{defi}\label{defi:fullMA} 
{\it A closed positive $(1,1)$-current $T$ on $X$ with cohomology class $\a$ is said to have \emph{full Monge-Amp{\`e}re mass} if
$$\int_X\langle T^n\rangle=\vol(\a).$$
We denote by $\cE(X,\a)$ the set of such currents.} 
{\it Let $\f$ be a $\theta$-psh function such that $T=\theta+dd^c\f$. 
The \emph{non-pluripolar Monge-Amp{\`e}re measure} of $\f$ is
$$
\MA(\f):=\langle(\theta+dd^c\f)^n\rangle=\langle T^n\rangle.
$$
We will say that $\f$ has \emph{full Monge-Amp{\`e}re mass} if $\theta+dd^c\f$ has full Monge-Amp{\`e}re mass. We denote by $\cE(X,\theta)$ the set of corresponding functions.}
\end{defi}

\section{Proof of the Main Theorem}
Throughout this section $X$ and $Y$ will be compact K\"ahler manifolds of complex dimension $n$. 

\begin{thm}\label{divisorialZariski}
Let $\alpha$ be a big class on $X$. Let $\alpha_1\in H^{1,1}(X,\R)$ be a modified nef class and $\alpha_2\in H^{1,1}(X,\R)$ be a pseudoeffective class. Then $\alpha=\alpha_1+\alpha_2$ is the divisorial Zariski decomposition of $\alpha$ if and only if $\vol(\alpha)=\vol(\alpha_1)$.
\end{thm}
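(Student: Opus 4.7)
The forward direction is immediate from Proposition \ref{bou6ii}: if $\alpha=\alpha_1+\alpha_2$ is the divisorial Zariski decomposition then $\alpha_1=Z(\alpha)$, so $\vol(\alpha_1)=\vol(Z(\alpha))=\vol(\alpha)$.

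For the converse I would exploit the freedom in choosing a positive representative of $\alpha_2$, together with the existence and uniqueness theory of complex Monge--Amp\`ere equations from \cite{begz}, to force $\alpha_2$ to be rigid, and then read off the Zariski decomposition. Concretely, fix a current $T_\alpha\in\alpha$ with minimal singularities and put $\mu:=\langle T_\alpha^n\rangle$, a non-pluripolar positive measure of total mass $\vol(\alpha)=\vol(\alpha_1)$ (so $\alpha_1$ is big). The BEGZ existence theorem applied in the big class $\alpha_1$ produces a current $T_1\in\cE(X,\alpha_1)$ solving $\MA(T_1)=\mu$.

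Given an \emph{arbitrary} positive current $S\in\alpha_2$ and $T:=T_1+S\in\alpha$, multilinearity of the non-pluripolar product gives the measure identity
\[
\langle T^n\rangle=\sum_{k=0}^{n}\binom{n}{k}\langle T_1^k\wedge S^{n-k}\rangle,
\]
each summand being a non-negative measure. Since $\int_X\langle T_1^n\rangle=\vol(\alpha_1)=\vol(\alpha)$ and $\int_X\langle T^n\rangle\le\vol(\alpha)$ because $T\in\alpha$, one gets $\int_X\langle T^n\rangle=\vol(\alpha)$ and hence every mixed term for $k<n$ vanishes identically as a measure. Therefore $T\in\cE(X,\alpha)$ with $\MA(T)=\MA(T_1)=\mu=\MA(T_\alpha)$. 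Uniqueness of solutions in $\cE(X,\alpha)$ (again from \cite{begz}) then forces $T=T_\alpha$ as currents, so $S=T_\alpha-T_1$. As $S$ was arbitrary, $\alpha_2$ contains a \emph{single} positive current, i.e.\ it is rigid.

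To conclude, I would argue that a nonzero rigid pseudoeffective class is automatically an effective divisor class. Writing the Siu decomposition of this unique $S$ as $S=R+\sum_j\lambda_j[D_j]$, the residual class $\{R\}$ has vanishing divisorial Lelong numbers and is therefore modified nef (Proposition \ref{bou2}); if it were nonzero, Property (3) together with Demailly regularization would produce further positive currents in $\{R\}$, and hence in $\alpha_2$, contradicting rigidity. Thus $R=0$ and $\alpha_2=\{D\}$ with $D=\sum_j\lambda_j D_j$. Finally I compare $T_\alpha=T_1+[D]$ with the auxiliary current $T_1'+[D]\in\alpha$, where $T_1'\in\alpha_1$ has minimal singularities, so $\nu(T_1',D')=0$ for every prime divisor $D'$ by Propositions \ref{bou2} and \ref{bou6ii}. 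For each component $D_j$ of $D$ one obtains $\lambda_j\ge\nu(\alpha,D_j)=\nu(T_1,D_j)+\lambda_j$, which forces $\nu(T_1,D_j)=0$ and $\nu(\alpha,D_j)=\lambda_j$; for $D'\notin\{D_j\}$ both Lelong numbers are zero. Hence $N(\alpha)=D$ and $Z(\alpha)=\alpha_1$, proving the claim. The main obstacle is the Monge--Amp\`ere step: one must control $\langle(T_1+S)^n\rangle$ as a \emph{measure} (not merely in total mass) and then invoke BEGZ uniqueness in $\cE(X,\alpha)$ to identify $T_1+S$ with $T_\alpha$, both of which rely decisively on the sharp theory of full mass currents.
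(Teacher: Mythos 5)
Your argument coincides with the paper's up to the rigidity of $\alpha_2$: taking $\mu=\langle T_\alpha^n\rangle$ instead of a smooth volume form is harmless (the paper's smooth choice only serves to invoke \cite[Theorem 4.1]{begz} and get a minimal-singularity solution in $\alpha$, which you get for free since $T_\alpha$ is minimal), and the mass computation killing the mixed terms $\langle T_1^k\wedge S^{n-k}\rangle$ together with uniqueness of full mass solutions is exactly the paper's mechanism. The genuine gap is the next step, your claim that a nonzero rigid pseudoeffective class is automatically an effective divisor class, i.e.\ that the residual part $R$ in the Siu decomposition of the unique current of $\alpha_2$ vanishes. The justification you offer --- that Property (3) plus Demailly regularization would produce further positive currents in $\{R\}$ --- does not work: regularization only yields currents with a small negative part ($T_\e\ge-\e\omega$), not new \emph{positive} currents in the class, and Property (3) describes the structure of the modified nef cone, not the set of positive currents inside a given boundary class. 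Indeed the general principle is false: there exist nonzero rigid nef (hence modified nef) classes that are not divisor classes --- on a complex torus, the class of a constant semipositive $(1,1)$-form of rank one whose kernel foliation has irrational slope contains a unique positive current (by unique ergodicity of the linear foliation) yet is certainly not the class of an effective divisor. So rigidity alone cannot give $R=0$, and your final comparison with $T_1'+[D]$, which is fine in itself, rests on this unproved step.

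This is precisely the point where the paper has to work, and the missing argument is available inside your own setup, since you already know $\langle T_1^{n-1}\wedge S\rangle=0$ and $\alpha_1$ is big. Choose a K\"ahler current $K\in\alpha_1$ with $K\ge\e\omega$, smooth on a Zariski open set $\Omega_1$, and set $\Omega_2=\Amp(\alpha)\cap\Omega_1$; since $T=T_\alpha$ has minimal singularities, $S$ has locally bounded potentials on $\Omega_2$, and by the monotonicity of non-pluripolar masses \cite[Proposition 1.20]{begz} one gets
$$0\le\e^{n-1}\int_{\Omega_2}\omega^{n-1}\wedge S\le\int_{\Omega_2}K^{n-1}\wedge S\le\int_{\Omega_2}T_1^{n-1}\wedge S=0,$$
so the trace measure of $S$ vanishes on $\Omega_2$ and $S$ is supported on the proper analytic set $X\setminus\Omega_2$; Demailly's support theorem \cite[Corollary 2.14]{Dem} then gives $S=\sum_j\lambda_j[D_j]$. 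With this replacement (rather than the rigidity heuristic), your concluding Lelong-number comparison, or equivalently the paper's appeal to Propositions \ref{bou2} and \ref{bou6ii}, identifies $N(\alpha)=\sum_j\lambda_j D_j$ and $Z(\alpha)=\alpha_1$ and completes the proof.
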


\begin{rem}
In particular, Theorem \ref{divisorialZariski} implies that the pseudoeffective class $\alpha_2$ will be of the form $\alpha_2=\sum_{j=1}^N \lambda_j \{D_j\}$ where $D_j$ are prime divisors and $\lambda_j=\nu(\alpha,D_j)\geq 0$.
\end{rem}

%\begin{proof}
%Let us start proving that Theorem \ref{lelongNumber} holds if and only if Theorem \ref{divisorialZariski} does. Suppose Theorem \ref{lelongNumber} holds and assume $\alpha\in H^{1,1}_{big}(X,\R)$ decomposes as $\alpha=\alpha_1+\alpha_2$ where $\alpha_1$ is a modified nef class with $\vol(\alpha)=\vol(\alpha_1)$ and $\alpha_2$ is a pseudoeffective class. Let $T_{min,1}\in \alpha_1$ be a current with minimal singularities and consider $T=T_{min,1}+\tau$ where $\tau$ is a positive closed $(1,1)$-current in $\alpha_2$.
%\end{proof}

\begin{proof}[Proof of Theorem \ref{divisorialZariski}]
If $\alpha=\alpha_1+\alpha_2$ is the divisorial Zariski decomposition then by \cite[Proposition 3.20]{Bou2} we have $	\vol(\alpha)=\vol(\alpha_1)$.\\
Viceversa, assume that we have a decomposition as above with $\vol(\alpha)=\vol(\alpha_1)=V$. Let $\mu$ be a smooth volume form on $X$ with total mass $V$ and let $T_1\in\cE(X,\alpha_1)$ be the unique solution of the complex Monge-Amp\`ere equation
$$\langle T_1^n\rangle=\mu.$$ Such $T_1$ exists and is unique by \cite[Theorem 3.1]{begz}. Furtheremore, $T_1$ has minimal singularities in its cohomology class \cite[Theorem 4.1]{begz}. Let $\tau$ be any positive closed $(1,1)$-current in $\alpha_2$ and set $T=T_1+\tau$.  By multilinearity of the non-pluripolar product \cite[Proposition 1.4]{begz}, we have $\langle T^n \rangle \geq \langle T_1^n\rangle $. By (\ref{volTmin}) and (\ref{volT}) we have $$\int_X  \langle T^n \rangle \leq \vol(\alpha)=\vol(\alpha_1)=\int_X \langle T_1^n \rangle .$$
Therefore $\langle T^n \rangle =\langle T_1^n \rangle =\mu$. \\
Thus $T$ is a solution of the Monge-Amp\`ere equation $\langle T^n \rangle =\mu$ in the class $\alpha$ and by uniqueness, it follows that $\alpha_2$ is rigid, i.e. there exists a unique positive closed $(1,1)$-current in $\alpha_2$. Moreover, $T$ has minimal singularities, so $\vol(\alpha)=\int_X \langle T^n \rangle$. Then by the multilinearity of the non-pluripolar product
$$\sum_{j=0}^{n-1} \binom{n}{j} \langle T_1^j \wedge \tau ^{n-j}\rangle=0.$$
Let $S\in\alpha_1$ be a K\"ahler current, i.e. $S \geq \varepsilon \omega$ for some $\varepsilon>0$. Let $\Omega_1$ be a non-empty Zariski open subset where $S$ is smooth and let $\Omega=\Amp(\alpha)\neq \emptyset$. Since $T$ has minimal singularities, then $T\in \alpha$  has locally bounded potential on $\Omega$. In particular, the current $\tau $ has locally bounded potential in $\Omega_2=\Omega \cap \Omega_1=X\setminus \Sigma$. Then we have
$$0\leq \varepsilon^{n-1}\int_{\Omega_2} \omega^{n-1} \wedge \tau \leq \int_{\Omega_2} S^{n-1} \wedge \tau\leq \int_{\Omega_2} T_1^{n-1} \wedge \tau=0$$
where the last inequality follows from \cite[Proposition 1.20]{begz}. This implies that the current $\tau$ is supported on $\Sigma$.\\
By \cite[Corollary 2.14]{Dem}, $\tau$ is of the form $$\tau=\sum_{j=1}^N \lambda_j [D_j]$$ where $D_j$ are irreducibile divisors and $\lambda_j\geq 0$. Moreover, observe that, since $\alpha_1$ is modified nef and $T_1 $ has minimal singularities we have $\nu(T_1, D_j)=0$ for any $j$ by Proposition \ref{bou2} hence $\lambda_j=\nu(T, D_j)$. In other words, $T=T_1+\tau$ is the Siu decomposition of $T$. Since $\alpha$ is big and $T$ has minimal singularities, by Proposition \ref{bou6ii} we have $\nu(\alpha, D)=\nu(T, D)$, hence the conclusion.
\end{proof}
\noindent We would like to observe that in the algebraic case, for a projective manifold $X$, Theorem \ref{divisorialZariski} can be proved using the differentiability of the volume \cite{BFJ}. \\
We thank S\'ebastien Boucksom for the following remark:
\begin{rem}\label{zariskiproj}
Let $N^1(X)_{\R}\subset H^{1,1}(X,\R)$ denote the real N\'eron-Severi space and $\alpha\in N^1(X)_{\R}$ be a big class. Assume $\alpha=\alpha_1+ \sum_{i=1}^N \lambda_i \{D_i\}$ with 
\begin{itemize}
\item[(i)] $\alpha_1\in N^1(X)_{\R}$ a modified nef class such that $\vol(\alpha)=\vol(\alpha_1)$;
\item[(ii)] $\lambda_i\geq 0$;
\item[(iii)] $D_i$ are prime divisors for  any $i$.
\end{itemize}
Then $\alpha=\alpha_1+ \sum_{i=1}^N \lambda_i \{D_i\}$ is the divisorial Zariski decomposition of $\alpha.$\\
We claim that it is enough to prove that for any prime divisor $D \not\subset E_{nk}(\alpha)$,
\begin{equation}\label{volIncreasing}
\vol(\alpha_1+tD) >\vol(\alpha_1) \,\quad \forall t>0.
\end{equation}
Indeed, to prove that $\alpha=\alpha_1+ \sum_{i=1}^N \lambda_i \{D_i\}$ is the divisorial Zariski decomposition of $\alpha$, we have to check that $D_i \subset E_{nk}(\alpha_1)$ by Property \ref{properties}(5). If $\lambda_i>0$ and $D_i\not\subset E_{nk}(\alpha_1)$ then (\ref{volIncreasing}) yields
$$\vol(\alpha)\geq \vol(\alpha_1+ tD_i) >\vol(\alpha_1)=\vol(\alpha), $$
hence a contradiction.\\
The inequality (\ref{volIncreasing}) easily follows from the differentiability of the volume. Indeed, by \cite[Theorem A]{BFJ} we have $$\frac{d}{dt}\big |_{t=0} \vol(\alpha_1+tD)=n \langle \alpha_1^{n-1}\rangle \cdot D$$
where $\langle \alpha_1^{n-1}\rangle$ denotes \emph{the positive product} of $\alpha$ defined in \cite[Definition 1.17]{begz}. Thanks to \cite[Remark 4.2 and Theorem 4.9]{BFJ}, we have $\langle \alpha_1^{n-1}\rangle \cdot D>0$, hence $\vol(\alpha_1+tD)$ is a continuous strictly increasing function for small $t>0$, and so $\vol(\alpha_1+tD)>\vol(\alpha_1)$.
\end{rem}

\section{Currents with full Monge-Amp\`ere mass}\label{fullMassCurrents}
In this section we state a few consequences of Theorem \ref{divisorialZariski}. The first result states that currents with full Monge-Amp\`ere mass in $\alpha$ compute the coefficients of the divisorial Zariski decomposition of $\alpha$.
\begin{thm}\label{lelongNumber}
Let $\alpha$ be a big class on $X$. If $T\in \cE(X,\alpha)$ and $T_{\min}\in\alpha$ is a current with mininal singularities, then the set
$$\{x\in X \,:\, \nu(T,x)> \nu(T_{\min},x)\}$$ is contained in a countable union of analytic subsets of codimension $\geq 2$ contained in $E_{nK}(\alpha)$. In particular, $\nu(T,D)=\nu(T_{\min},D)$ for any irreducible divisor $D\subset X$.
\end{thm}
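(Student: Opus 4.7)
The plan is to apply the Main Theorem (Theorem \ref{divisorialZariski}) to the Siu decomposition of $T$ itself, and to run the same argument for $T_{\min}$ so that the two decompositions can be compared termwise. Write $T=R+\sum_j \mu_j[D_j]$ with $\mu_j=\nu(T,D_j)>0$ and $R$ having no divisorial Lelong numbers. I first want to check that $\{R\}$ is modified nef: this follows from Proposition \ref{bou2}, since the minimal multiplicity satisfies $\nu(\{R\},D)\le\nu(R,D)=0$ for every prime divisor $D$. Next I want $\vol(\{R\})=\vol(\alpha)$; the key observation is that the non-pluripolar product ignores the pluripolar currents $[D_j]$, so by multilinearity $\langle T^n\rangle=\langle R^n\rangle$, whence the full mass hypothesis together with \cite[Proposition 1.20]{begz} yields
\[
\vol(\{R\})\ge \int_X\langle R^n\rangle=\int_X\langle T^n\rangle=\vol(\alpha),
\]
while the reverse inequality follows from monotonicity of volume on the pseudoeffective cone applied to $\alpha=\{R\}+\sum_j \mu_j\{D_j\}$. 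Theorem \ref{divisorialZariski} then forces $\{R\}=Z(\alpha)$ and $\sum_j\mu_j D_j=N(\alpha)$, so $\mu_j=\nu(\alpha,D_j)$.

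Since $T_{\min}\in\cE(X,\alpha)$ as well, the same reasoning applied to $T_{\min}$ gives $T_{\min}=R_{\min}+\sum_j \mu_j[D_j]$ with identical divisors and multiplicities. Combined with Proposition \ref{bou6ii}, this already proves the ``in particular'' assertion: $\nu(T,D)=\nu(T_{\min},D)=\nu(\alpha,D)$ for any prime divisor $D$. Moreover the divisorial parts contribute equally to the pointwise Lelong numbers of $T$ and $T_{\min}$, so
\[
\{x\in X:\nu(T,x)>\nu(T_{\min},x)\}\subset E_+(R)=\bigcup_{k\ge 1}E_{1/k}(R).
\]
Each $E_{1/k}(R)$ is analytic by Siu's theorem and contains no prime divisor (any such $D$ would force $\nu(R,D)\ge 1/k>0$), hence has codimension at least $2$.

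The main obstacle I anticipate is the containment $E_+(R)\subset E_{nK}(\alpha)$. A preliminary observation is that $\supp N(\alpha)\subset E_{nK}(\alpha)$: if some prime $D$ with $\nu(\alpha,D)>0$ met $\Amp(\alpha)$, a K\"ahler current in $\alpha$ with analytic singularities smooth at a point of $D\cap\Amp(\alpha)$ would have vanishing Lelong number along $D$, contradicting $\nu(\alpha,D)=\nu(T_{\min},D)$ via the usual comparison of potentials (any positive current is more singular than $T_{\min}$). Consequently any $x_0\in\Amp(\alpha)$ avoids the $D_j$, giving $\nu(R,x_0)=\nu(T,x_0)$, and I am reduced to showing that every $T\in\cE(X,\alpha)$ has vanishing Lelong number at every $x_0\in\Amp(\alpha)$. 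For this I plan a Kiselman--Demailly comparison: at such $x_0$ the potential $\varphi_{T_{\min}}$ is locally bounded, and a hypothetical $\nu(T,x_0)>0$ would allow me to form an upper envelope $u$ of $\varphi_T$ against a $\theta$-psh function with a log pole at $x_0$ of coefficient $\nu(T,x_0)$, constructed from a K\"ahler current in $\alpha$ smooth near $x_0$; since $u\ge \varphi_T$, monotonicity of $\cE(X,\alpha)$ would place $u$ in $\cE(X,\alpha)$, while the Demailly comparison would show that the Bedford--Taylor Monge--Amp\`ere measure of $u$ carries a Dirac of size $\nu(T,x_0)^n$ at $x_0$, producing a strict deficit in the non-pluripolar mass and contradicting $u\in\cE(X,\alpha)$. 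Making the construction of $u$ precise on a coordinate ball inside $\Amp(\alpha)$, and justifying the passage from the local Bedford--Taylor computation to the global non-pluripolar mass, is where the technical heart of the argument lies.
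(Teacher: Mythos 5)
Your core argument is the same as the paper's: take the Siu decomposition $T=R+\sum_j\mu_j[D_j]$, check that $\{R\}$ is modified nef (Proposition \ref{bou2}) and that $\vol(\{R\})=\vol(\alpha)$ using that the non-pluripolar product ignores the divisorial part, then invoke Theorem \ref{divisorialZariski} to conclude that this is the divisorial Zariski decomposition, so $\mu_j=\nu(\alpha,D_j)=\nu(T_{\min},D_j)$ by rigidity of the negative part and Proposition \ref{bou6ii}. Your volume equality is obtained slightly differently (via $\langle T^n\rangle=\langle R^n\rangle$, \cite[Proposition 1.20]{begz} and monotonicity of the volume, rather than the paper's auxiliary current $T_{\min,1}+\sum_j\lambda_j[D_j]$), and your observation that $T$ and $T_{\min}$ then have the \emph{same} divisorial parts lets you bound the exceptional set by $E_+(R)=\bigcup_k E_{1/k}(R)$ alone, which is a mild simplification of the paper's argument (the paper also needs the sets $\Sigma_j=\{x\in D_j:\nu(T,x)>\lambda_j\}$). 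All of this is correct.

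The one point where you diverge is the containment in $E_{nk}(\alpha)$, i.e.\ the statement that a current in $\cE(X,\alpha)$ has vanishing Lelong numbers on $\Amp(\alpha)$ (equivalently $E_+(T)\subset E_{nk}(\alpha)$, which also gives $E_{1/k}(R)\subset E_{nk}(\alpha)$ since $\nu(R,x)\le\nu(T,x)$). The paper does not prove this: it is exactly \cite[Proposition 1.9]{dn}, quoted as a black box at the start of the proof (where it is also used to see that only finitely many $D_j$ occur; in your route finiteness instead comes out of the identification of $\sum_j\mu_j[D_j]$ with $N(\alpha)$, which is fine). Your sketched replacement -- gluing a log pole at $x_0\in\Amp(\alpha)$ into the potential of a K\"ahler current smooth near $x_0$, taking the envelope $u\ge\varphi_T$, and comparing the atom $\nu(u,x_0)^n$ of the local Bedford--Taylor Monge--Amp\`ere measure with the global non-pluripolar mass -- is the standard way such statements are proved, but as you yourself note, the passage from the local atom to a strict deficit in $\int_X\MA(u)$ relative to $\vol(\alpha)$ is not justified in your write-up, and it is not a one-line matter. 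So either cite \cite[Proposition 1.9]{dn} (as the paper does), or complete that comparison argument; as it stands this is the only genuine gap in your proposal.
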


\begin{proof}
If $T\in \cE(X,\alpha)$ then $E_+(T)\subset E_{nk}(\alpha)$ because of \cite[Proposition 1.9]{dn}. On the other hand if we write the Siu decomposition of $T$ as
$$T=T_1+\sum_{j\geq 1} \lambda_j [D_j]$$
where $D_j$ are prime divisors and ${\rm codim}\, E_c(T_1)\geq 2$ for all $c>0$, we have $D_j  \subset X\setminus \Amp(\alpha)$. Hence there is a finite number of $D_j$ such that $\lambda_j\neq 0$. In particular, $\nu(T_1,D_j)=0$ for any $j$.\\
Set $\alpha_1:=\{T_1\}$ and note that, since $\alpha$ is big, $\alpha_1$ is big. Moreover, $\alpha_1$ is modified nef. Indeed, pick $T_{\min,1}\in \alpha_1$ a current with minimal singularities. Since $0\leq \nu(T_{\min,1},D_j)\leq \nu(T_1, D_j)=0$, we have $\nu(T_{\min,1},D)=0$ for any $D$ prime divisor. The claim then follows from Propositions \ref{bou2} and \ref{bou6ii}.\\
Furthermore, the current $S=T_{\min,1}+\sum_{j=1}^N \lambda_j[D_j]$ is less singular than $T$, hence with full Monge-Amp\`ere mass \cite[Corollary 2.3]{begz}. Therefore
$$\vol(\alpha)=\int_X \langle T^n\rangle= \int_X \langle S^n\rangle=\int_X \langle T_{\min,1}^n\rangle=\vol(\alpha_1).$$
We are now under the assumptions of Theorem \ref{divisorialZariski}, thus $\alpha=\alpha_1+\sum_{j\geq 1} \lambda_j [D_j] $ is the divisorial Zariski decompostion of $\alpha$ and
 $$\nu(T,D_j)=\lambda_j=\nu(\alpha, D_j)=\nu(T_{\min}, D_j)$$
where the last identity is Proposition \ref{bou6ii}.\\
Moreover, $$B:=\{x\in X\;:\; \nu(T,x)> \nu(T_{\min}, x)\} \subset \bigcup_{c\in \Q^+} E_c (T_1) \cup \bigcup_{j=1}^N \Sigma_j,$$
where $\Sigma_j:=\{x\in D_j \; :\; \nu(T,x)>\lambda_j\}$. Indeed, if $x\in B$ is such that $x\in X\setminus \bigcup_{j=1}^N D_j$ then $\nu(T, x)= \nu(T_1,x)> \nu(T_{\min,1},x)\geq 0$. If $x\in D_j$ for some $j$ and $x\in B$ then $\nu(T,x)>\nu(T_{\min},D_j)=\lambda_j$, that is $x\in \Sigma_j$. Finally, observe that by \cite{Siu} both $E_c(T_1)$ and $\Sigma_j$ are analytic subsets of codimension $\geq 2$ for any $c>0$ and $j$, respectively.
%Thus, 
%$$\{x\in X \,:\,\nu(T,x)> \nu(T_{\min},x)\}\subseteq \{x\in X \, :\,\nu(T_1,x)> 0\}=\bigcup_n E_{1/n} (T_1).$$
\end{proof}

In \cite{dn}, the first named author proved that finite energy classes (and in particular the energy class $\cE$ defined in section \ref{fullMassCurrents}) are in ge\-neral not preserved by bimeromorphic maps (see \cite[Example 1.7 and Proposition 2.3]{dn}). In order to circumvent this problem she introduced a natural condition.

\begin{defi}\label{V}
\it{Let $f: X \dasharrow Y$ be a bimeromorphic map and $\a$ be a big class on $X$. Let $\mathcal{T}_{\a}(X)$ denote the set of positive closed $(1,1)$-currents in $\a$. We say that Condition (\texttt{V}) is satisfied if 
\begin{equation*}
f_{\star} \Big( \mathcal{T}_{\a}(X)  \Big)= \mathcal{T}_{f_{\star} \a}(Y)
\end{equation*}
where $\mathcal{T}_{f_{\star} \a}(Y)$ is the set of positive currents in the image class $f_{\star} \a$.}
\end{defi}
\begin{rem}\label{volpush}
Note that in general we have $f_{\star} \Big( \mathcal{T}_{\a}(X)  \Big)\subseteq  \mathcal{T}_{f_{\star} \a}(Y)$. This means in particular that the push-forward of a current with minimal singularities in $\a_X$ has not necessarly minimal singularities in $f_\star \a_X$, hence $\vol(f_\star \alpha_X)\geq \vol( \alpha_X)$.
\end{rem}
\noindent The first named author showed \cite[Proposition 2.3]{dn} that Condition (\texttt{V}) implies that $f_\star \cE(X, \alpha)= \cE(Y, f_\star \alpha)$.
\vspace{2mm}

%\noindent Condition (\texttt{V})  insures that the push-forward of a current with minimal singularities is still with minimal singularities: this easily implies that the volumes are preserved, i.e. $\vol(\a)=\vol(f_{\star} \a)$. 
%In \cite{dn} the first named author showed conversely that the condition $f_{\star} \Big( \mathcal{T}_{\a}(X)  \Big)= \mathcal{T}_{f_{\star} \a}(Y)$ is equivalent to $\vol(\a)=\vol(f_{\star} \a)$  in complex dimension 2, by using Hodge index theorem.

\vspace{2mm}
In the following we prove that Condition (\texttt{V}) is equivalent to the preservation of volumes.

\begin{lem}\label{equiv}
Let $f: X \rightarrow Y$ be a birational morphism and let $\a$ be a big class on $X$. Let $E_i, F_i$ be distinct prime divisors contained in the exceptional locus $Exc(f)$ of $f$, then there exist $a_i, b_i\in \R^+$ such that
\begin{equation}\label{supp}
\alpha= f^\star f_\star \alpha-\left [\sum_i a_i\{E_i\}- \sum_i b_i\{F_i\}\right].
\end{equation}
Moreover, Condition (\texttt{V}) is equivalent to 
\begin{itemize}
\item [(i)]$a_i\leq \nu(f^\star f_\star\alpha, E_i)$ for any $i$;
\item[(ii)]$-b_i\leq \nu(f^\star f_\star\alpha, F_i)$ for any $i$.
\end{itemize}
\end{lem}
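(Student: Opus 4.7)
The plan is to first establish the cohomological decomposition (\ref{supp}), and then characterise Condition (\texttt{V}) by analysing the obvious candidate preimage of a positive current under $f_\star$.

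For the decomposition itself I would argue as follows. Since $f$ is a birational morphism between smooth compact K\"ahler manifolds, $f_\star f^\star = \mathrm{id}$ on $H^{1,1}(Y,\R)$, and hence
$$f_\star(f^\star f_\star \alpha - \alpha) = f_\star \alpha - f_\star \alpha = 0.$$
A real $(1,1)$-class on $X$ whose pushforward vanishes is represented by an $\R$-divisor supported on $\mathrm{Exc}(f)$. Writing this divisor as $\sum a_i E_i - \sum b_i F_i$ with $a_i,b_i\geq 0$ and with $E_i$ (resp.\ $F_i$) the exceptional prime divisors appearing with positive (resp.\ negative) coefficient yields (\ref{supp}).

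For the equivalence, the natural candidate preimage of $S\in \cT_{f_\star\a}(Y)$ is
$$T \;:=\; f^\star S - \sum_i a_i [E_i] + \sum_i b_i [F_i],$$
which by (\ref{supp}) lies in the class $\alpha$ and satisfies $f_\star T = S$, since $f_\star$ annihilates exceptional divisors. Its positivity reduces via Siu's decomposition to the single condition $\nu(f^\star S, E_i)\geq a_i$ for every $i$, the $+b_i[F_i]$ contributions being harmless; in particular condition (ii) is automatic because $-b_i\leq 0 \leq \nu(f^\star f_\star\a, F_i)$. For $(\Leftarrow)$, $f^\star S$ is a positive current in $f^\star f_\star \alpha$ so $\nu(f^\star S, E_i) \geq \nu(f^\star f_\star \alpha, E_i)\geq a_i$ by (i), giving $T\geq 0$ and hence Condition (\texttt{V}). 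For $(\Rightarrow)$, apply Condition (\texttt{V}) to a current $S_{\min}$ of minimal singularities in $f_\star\a$: we obtain $T\geq 0$ in $\a$ with $f_\star T = S_{\min}$, and $T - f^\star S_{\min}$ is a closed current with vanishing $f_\star$ supported on $\mathrm{Exc}(f)$. Matching cohomology classes and using the linear independence of the classes of exceptional prime divisors in $H^{1,1}(X,\R)$ (Negativity Lemma for modifications), we get $T = f^\star S_{\min} - \sum a_i[E_i] + \sum b_i[F_i]$; positivity along $E_i$ then forces $a_i \leq \nu(f^\star S_{\min}, E_i) = \nu(f^\star f_\star\a, E_i)$, where the last equality uses that pullback of a minimal singularities current under a birational morphism remains of minimal singularities.

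The main obstacle is controlling closed $(1,1)$-currents with trivial $f_\star$: one needs such currents to be supported on $\mathrm{Exc}(f)$ and to admit a decomposition into real combinations of integration currents over exceptional prime divisors matching the given cohomological decomposition. The uniqueness of this matching rests on the negativity of the intersection form restricted to the exceptional cone, and together with the fact that $f^\star$ preserves minimal singularities this is the only nontrivial input beyond Proposition \ref{bou2} and the material in \cite{Bou2}.
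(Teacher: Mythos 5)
Your proposal is correct and takes essentially the same route as the paper's proof: the same candidate preimage $f^\star S-\sum_i a_i[E_i]+\sum_i b_i[F_i]$ whose positivity is checked through Lelong numbers along the exceptional primes (with (ii) automatic), and, for the forward direction, the same specialization to a minimal-singularities current $S_{\min}$, upgrade of (\ref{supp}) to an identity of currents via linear independence of the exceptional classes, and use of the fact that $f^\star S_{\min}$ has minimal singularities in $f^\star f_\star\alpha$. The only cosmetic difference is that you obtain (\ref{supp}) by quoting the description of $\ker f_\star$ on $H^{1,1}(X,\R)$, while the paper gets it by applying Demailly's support theorem to $T-f^\star f_\star T$, which is essentially the same fact.
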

\noindent The statements in Lemma \ref{equiv} are quite standard but we include a proof for the reader's convenience.
\begin{proof}
The identity (\ref{supp}) follows from the fact that for any $T\in \alpha$ positive $(1,1)$-current, $T-f^\star f_\star T$ is supported on $Exc(f)$ since  $f$ is a biholomorphism on $X\setminus Exc(f)$. Therefore we conclude by \cite[Corollary 2.14]{Dem}.\\

\noindent Assume Condition (\texttt{V}) holds, that is, that any positive $(1,1)$-current $S\in f_\star \alpha$ can be written as $S=f_\star T$ for some positive $(1,1)$-current $T\in \alpha$. Since the cohomology classes of the excetional divisors of $f$ are linearly independent, by (\ref{supp}) we have an equality of currents
$$T+ \sum_i a_i [E_i]= f^\star f_\star T + \sum_i b_i [F_i].$$
Thus, for any $i$ we have $\nu(f^\star f_\star T, E_i)-a_i\geq 0$ and $\nu(f^\star f_\star T, F_i)+b_i\geq 0$. Hence (i) and (ii) since Condition (\texttt{V}) holds in particular for currents with minimal singularities in $f_\star \alpha$.\\
Conversely, let $S\in f_\star \alpha$ be a positive $(1,1)$-current. By the Siu decomposition the current
$$f^\star S -\sum_i \nu(f^\star S, E_i)[E_i]- \sum_i \nu(f^\star S, F_i)[F_i]$$ is positive. For any $i$, set $\lambda_i:= \nu(f^\star S, E_i)-a_i$ and $\mu_i:= \nu(f^\star S, F_i)+b_i$ and observe $\lambda_i,\mu_i\geq 0$ by (i) and (ii). Then
$$T:= f^\star S -\sum_i \nu(f^\star S, E_i)[E_i]- \sum_i \nu(f^\star S, F_i)[F_i]+ \sum_i\lambda_i [E_i]+\sum_i\mu_i [F_i]$$
is a positive $(1,1)$-current in $\alpha$ and by construction we have $f_\star T=S$. \\
\end{proof}

\begin{thm}\label{ConditionV}
Let $f: X \dasharrow Y$ be a bimeromorphic map and let $\a$ be a big class on $X$. Then Condition (\texttt{V}) holds if and only if $\vol(\a)=\vol(f_{\star}\a)$.
\end{thm}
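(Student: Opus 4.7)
The plan is to prove both implications via Lemma \ref{equiv}, using the full-mass machinery developed in the previous sections. By passing to a common resolution $\pi_X:Z\to X$, $\pi_Y:Z\to Y$ of $f$, the general bimeromorphic case reduces to that of a birational morphism: Condition (\texttt{V}) for the morphism $\pi_X$ is automatic (every positive current in $\alpha$ lifts via $\pi_X^\star$) and volumes are preserved under pullback, so I may assume $f:X\to Y$ is a birational morphism.

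For $(\Rightarrow)$, I would take $T$ with minimal singularities in $\alpha$, so $T\in\cE(X,\alpha)$. By \cite[Proposition 2.3]{dn}, cited just after Definition \ref{V}, Condition (\texttt{V}) yields $f_\star T\in\cE(Y,f_\star\alpha)$, whence $\int_Y\langle (f_\star T)^n\rangle=\vol(f_\star\alpha)$. Since $f$ is a biholomorphism away from the pluripolar set $\mathrm{Exc}(f)$ and the non-pluripolar product ignores pluripolar sets, the same integral equals $\int_X\langle T^n\rangle=\vol(\alpha)$, giving $\vol(\alpha)=\vol(f_\star\alpha)$.

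For $(\Leftarrow)$, set $\gamma:=f^\star f_\star\alpha$, so $\vol(\gamma)=\vol(f_\star\alpha)=\vol(\alpha)$ by invariance of volume under pullback. I aim to verify condition (i) of Lemma \ref{equiv}, i.e.\ $a_i\leq\nu(\gamma,E_i)$; condition (ii) is automatic since $b_i\geq 0$. Running the previous computation in reverse, a current $T$ with minimal singularities in $\alpha$ satisfies $f_\star T\in\cE(Y,f_\star\alpha)$, and hence $f^\star f_\star T\in\cE(X,\gamma)$, because its non-pluripolar mass on $X$ equals that of $f_\star T$ on $Y$, which is $\vol(\gamma)$. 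Theorem \ref{lelongNumber} then forces
\[
\nu(f^\star f_\star T, E_i)=\nu(\gamma, E_i).
\]
Next, I would upgrade the cohomological identity of Lemma \ref{equiv} to the level of currents: the closed $(1,1)$-current $T-f^\star f_\star T$ is supported on $\mathrm{Exc}(f)$, hence a real linear combination of $[E_i]$'s and $[F_i]$'s with coefficients pinned down by the class $\alpha-\gamma$, giving
\[
T=f^\star f_\star T-\sum_i a_i[E_i]+\sum_i b_i[F_i].
\]
Taking Lelong numbers along $E_i$ and using $\nu(T,E_i)\geq 0$ yields $\nu(f^\star f_\star T,E_i)\geq a_i$, hence $\nu(\gamma,E_i)\geq a_i$, as required.

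The main obstacle will be the current-level identity above: the proof of Lemma \ref{equiv} invokes \cite[Corollary 2.14]{Dem}, which concerns positive currents, whereas $T-f^\star f_\star T$ is only a difference of positive currents. I expect the extension to closed (not necessarily positive) $(1,1)$-currents supported on an analytic subset of codimension one to follow from the standard support theorem for closed currents, but care is required to set it up correctly.
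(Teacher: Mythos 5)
Your argument is correct, and its skeleton (reduction to a birational morphism via a resolution of the graph, Lemma \ref{equiv}, preservation of non-pluripolar masses, Theorem \ref{lelongNumber}) is the same as the paper's; the forward implication is essentially identical. In the converse, however, you take a genuinely different route: the paper introduces the auxiliary class $\beta=f^\star f_\star\alpha+\sum_i b_i\{F_i\}=\alpha+\sum_i a_i\{E_i\}$, checks via Lemma \ref{equiv} that $\beta$ satisfies Condition (\texttt{V}), applies the already-proved implication to get $\vol(\beta)=\vol(f_\star\alpha)=\vol(\alpha)$, and then compares the two full-mass currents $T_{\min}+\sum_i a_i[E_i]$ and $f^\star S_{\min}+\sum_i b_i[F_i]$ in $\beta$ through Theorem \ref{lelongNumber}; this only uses the cohomological identity (\ref{supp}). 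You instead work directly in $\gamma=f^\star f_\star\alpha$, show $f^\star f_\star T_{\min}\in\cE(X,\gamma)$ (note that Theorem \ref{lelongNumber} requires $\gamma$ big, which holds since $\vol(\gamma)=\vol(f_\star\alpha)\geq\vol(\alpha)>0$ and $\gamma$ is pseudoeffective), and then need the current-level refinement $T_{\min}+\sum_i a_i[E_i]=f^\star f_\star T_{\min}+\sum_i b_i[F_i]$. The obstacle you flag is real but standard, and in fact the paper itself relies on the same point inside the proof of Lemma \ref{equiv}: $T-f^\star f_\star T$ is a closed current of order $0$ (a difference of closed positive currents, hence normal) supported on $f^{-1}(f(\mathrm{Exc}(f)))$, whose divisorial components lie in $\mathrm{Exc}(f)$ and which is of pure codimension one because $Y$ is smooth; Demailly's second support theorem for closed normal currents (the normal-current analogue of \cite[Corollary 2.14]{Dem}) then gives $T-f^\star f_\star T=\sum_j\lambda_j[G_j]$ with real constants, and linear independence of the classes of the $f$-exceptional prime divisors pins the coefficients to those of (\ref{supp}). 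Alternatively, you could sidestep this entirely by adopting the paper's trick: place the two positive currents $T_{\min}+\sum_i a_i[E_i]$ and $f^\star S_{\min}+\sum_i b_i[F_i]$ in the common class $\beta$ and compare their Lelong numbers along $E_j$ via Theorem \ref{lelongNumber}, which needs only the class-level identity. What your version buys is that it avoids the auxiliary class $\beta$ and the re-application of the forward implication; what the paper's version buys is that it never needs the support theorem for non-positive currents beyond what is already encapsulated in Lemma \ref{equiv}.
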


%\begin{rem}\label{blowup}
%When $X,Y$ are projective, Theorem \ref{ConditionV} holds for any birational map $f: X \dasharrow Y$ because of the weak factorization theorem \cite{W}. 
%it suffices to prove Theorem \ref{ConditionV} for $\pi:X \rightarrow Y$ a blow-up along a smooth center $\mathcal{Z}$. 
%Indeed, recall that  a birational map $f:X\dasharrow Y$ can be decomposed as
%\begin{displaymath}
%\xymatrix{ & \Gamma  \ar[dl]_{\pi_1} \ar[dr]^{\pi_2} 
% \\
% X & &  Y }
%\end{displaymath}
%where $\pi_1,\pi_2$ are compositions of blow-ups along smooth centers.\\
%Thus $f_\star= (\pi_2)_\star\pi_1^\star$. 
%The claim then follows from the fact that Theorem \ref{ConditionV} holds for pull-backs.
%\end{rem}

\begin{proof}
Condition (\texttt{V}) insures that there exists a positive current $T\in \alpha$ such that $f_\star T$ is a current with minimal singularities in $f_\star \alpha$. Then
$$\vol(\alpha)\geq \int_X \langle T^n \rangle = \int_Y \langle (f_\star T) ^n \rangle=\vol(f_\star \alpha).$$ 
By Remark \ref{volpush} we get $\vol(\a)=\vol(f_{\star} \a)$. \\
Let us now prove the converse implication.\\
First, observe that, applying a resolution of singularities, a bimeromorphic map $f:X\dasharrow Y$ can be decomposed as $f=h^{-1}\circ g$,
\begin{displaymath}
\xymatrix{ & Z  \ar[dl]_{h} \ar[dr]^{g} 
 \\
 X &  &  Y }
\end{displaymath}
where $h,g$ are two birational morphisms and $Z$ denotes a resolution of singularities for the graph of $f$. By the proof of \cite[Proposition 1.12]{begz}, for every birational morphism $h$ we have $h^{\star} \Big( \mathcal{T}_{\a}(X)  \Big)= \mathcal{T}_{h^{\star} \a}(Z)$, hence it suffices to prove the claim when $f$ is a birational morphism.\\
%By \cite[Remark 2.4]{dn} 

\noindent Let $E_i, F_i$ and $a_i, b_i$ as in (\ref{supp}). By Lemma \ref{equiv}, Condition (\texttt{V}) is equivalent to:
\begin{itemize}
\item [(i)]$a_i\leq \nu(f^\star f_\star\alpha, E_i)$ for any $i$;
\item[(ii)]$-b_i\leq \nu(f^\star f_\star\alpha, F_i)$ for any $i$ .
\end{itemize}
\noindent Condition (ii) is satisfied since $\nu(f^\star f_\star\alpha, F_i)\geq 0$. Thus we are left proving (i).\\
Consider $\beta:=f^\star f_\star \alpha+ \sum_i b_i\{F_i\}$. We notice that $f_\star \beta=f_\star \alpha$. Moreover, by Lemma \ref{equiv}, $\beta$ satisfies Condition (\texttt{V}). Indeed, for any $i$ we have $-b_i\leq \nu (f^\star f_\star \beta, F_i)=\nu (f^\star f_\star \alpha, F_i)$. By the first implication of this theorem, we get $\vol(\beta)=\vol(f_\star \beta)=\vol(f_\star \alpha)$.\\
Let $T_{\min}\in \alpha$ and $S_{\min}\in f_\star \alpha$ be currents with minimal singularities. Then $T_{\min}+\sum_i a_i[E_i]$ and $f^\star S_{\min} +\sum_i b_i[F_i]$ are both positive $(1,1)$-currents in $\beta $ with full Monge-Amp\`ere mass. Indeed,
$$\int_X \langle  \left(T_{\min}+\sum_i a_i[E_i]\right)^n\rangle = \int_X \langle T_{\min}^n\rangle=\vol(\alpha)$$
$$\int_X \langle  \left(f^\star S_{\min} +\sum_i b_i[F_i]\right)^n \rangle=\int_Y \langle  S_{\min}^n\rangle=\vol(f_\star \alpha) $$
and $\vol(\alpha)=\vol(f_\star \alpha)= \vol(\beta)$. \\
By Theorem \ref{lelongNumber} $$a_j\leq \nu(T_{\min}+\sum_i a_i[E_i], \, E_j)=\nu(f^\star S_{\min} +\sum_i b_i[F_i],\, E_j)= \nu(f^\star S_{\min}, E_j)$$
for any prime divisor $E_j$, since the prime divisors $F_i$ and $E_j$ are distinct. By Proposition \ref{bou6ii}, $a_j\leq \nu(f^\star S_{\min},\, E_j)=\nu(f^\star f_\star \alpha,\, E_j)$, hence the conclusion.
\end{proof}

\begin{thm}
Let $\alpha$ be a big class and $D$ be an irreducible divisor such that $D\cap \Amp(\alpha)\neq \emptyset$. Then
$$\vol(\alpha+tD)>\vol(\alpha)\quad \forall t>0.$$
Viceversa, if $D\cap \Amp(\alpha)= \emptyset$ then
$$\vol(\alpha+tD)=\vol(\alpha)\quad \forall t>0.$$
\end{thm}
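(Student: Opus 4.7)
My plan is to derive both implications from the Main Theorem (Theorem \ref{divisorialZariski}) combined with property (5) of the divisorial Zariski decomposition, applied to the fixed decomposition
$$\alpha+tD = Z(\alpha) + \bigl(\{N(\alpha)\}+t\{D\}\bigr),$$
where $\alpha = Z(\alpha) + \{N(\alpha)\}$ is the divisorial Zariski decomposition of $\alpha$ and $N(\alpha)=\sum_{j=1}^m \lambda_j D_j$. Here $Z(\alpha)$ is modified nef and big and the second summand is pseudoeffective, so the Main Theorem applies to this decomposition precisely when $\vol(\alpha+tD)=\vol(Z(\alpha))=\vol(\alpha)$. First I would record the monotonicity $\vol(\alpha+tD)\geq\vol(\alpha)$ for all $t\geq 0$: for a current $T_{\min}\in\alpha$ with minimal singularities, the multilinearity of the non-pluripolar product (as in the proof of Theorem \ref{divisorialZariski}) gives $\int_X\langle(T_{\min}+t[D])^n\rangle\geq\int_X\langle T_{\min}^n\rangle=\vol(\alpha)$, which combined with (\ref{volT}) yields the claim.

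The key technical input is the set-theoretic identity $E_{nk}(\alpha)=E_{nk}(Z(\alpha))$. For the inclusion $\subseteq$, any K\"ahler current $S\in Z(\alpha)$ gives rise to a K\"ahler current $S+[N(\alpha)]\in\alpha$, so $E_{nk}(\alpha)\subset E_+(S)\cup\supp N(\alpha)$; minimizing over $S$ and noting that $\supp N(\alpha)\subset E_{nk}(Z(\alpha))$ by property (5) applied to the divisorial Zariski decomposition of $\alpha$ itself, I obtain $E_{nk}(\alpha)\subset E_{nk}(Z(\alpha))$. For the reverse inclusion, starting from a K\"ahler current $T\in\alpha$ with $T\geq\varepsilon\omega$, the Lelong number bound $\nu(T-\varepsilon\omega, D_j)=\nu(T, D_j)\geq\nu(\alpha, D_j)=\lambda_j$ and iterated application of Siu's decomposition show that $T-[N(\alpha)]\geq\varepsilon\omega$ is itself a K\"ahler current, which now lies in $Z(\alpha)$; since $E_+(T-[N(\alpha)])\subset E_+(T)$, intersecting over $T$ yields $E_{nk}(Z(\alpha))\subset E_{nk}(\alpha)$.

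For the converse direction, I would assume $D\cap\Amp(\alpha)=\emptyset$, which means $D\subset E_{nk}(\alpha)=E_{nk}(Z(\alpha))$. Since each $D_j$ also lies in $E_{nk}(Z(\alpha))$ by property (5), I apply property (5) again to $Z(\alpha)$ and the prime divisors $D_1,\ldots,D_m,D$ (merging $D$ with $D_{j_0}$ if they coincide, so that all remaining coefficients are positive) to obtain
$$N\bigl(Z(\alpha)+\{N(\alpha)\}+t\{D\}\bigr)=N(\alpha)+t[D],$$
i.e.\ the fixed decomposition is the divisorial Zariski decomposition of $\alpha+tD$. Hence $Z(\alpha+tD)=Z(\alpha)$ and $\vol(\alpha+tD)=\vol(Z(\alpha))=\vol(\alpha)$.

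For the direct direction, suppose for contradiction that $\vol(\alpha+t_0 D)=\vol(\alpha)$ for some $t_0>0$ (by monotonicity this is the only way the conclusion can fail). The Main Theorem applied to the fixed decomposition with $\alpha_1=Z(\alpha)$ and $\alpha_2=\{N(\alpha)\}+t_0\{D\}$ identifies it as the divisorial Zariski decomposition of $\alpha+t_0 D$, and property (5) then forces $D\subset E_{nk}(Z(\alpha))=E_{nk}(\alpha)$, i.e.\ $D\cap\Amp(\alpha)=\emptyset$, contradicting the hypothesis. The main effort in the proof lies in establishing the identity $E_{nk}(\alpha)=E_{nk}(Z(\alpha))$, in particular the $\supseteq$ inclusion requiring that subtracting the negative part $[N(\alpha)]$ preserves the K\"ahler-current property; once this is in place, both implications follow cleanly from the Main Theorem and property (5).
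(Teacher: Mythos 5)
Your proof is correct and takes essentially the same route as the paper: monotonicity of the volume, Theorem \ref{divisorialZariski} applied to the decomposition $\alpha+tD=Z(\alpha)+\bigl(\{N(\alpha)\}+t\{D\}\bigr)$, Property \ref{properties}(5), and the key identity $E_{nk}(\alpha)=E_{nk}(Z(\alpha))$, which is exactly the paper's Lemma \ref{nonK} ($\Amp(\alpha)=\Amp(Z(\alpha))$) in complementary form. The only cosmetic differences are that the paper proves that lemma using K\"ahler currents with analytic singularities and first reduces the strict-inequality direction to the modified nef and big case, whereas you prove the lemma via Lelong numbers and Siu decomposition and apply the Main Theorem directly to the fixed decomposition; both variants work.
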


\begin{proof}
We first reduce to the case $\alpha$ modified nef and big class. Let $\alpha= Z(\alpha)+\{N(\alpha)\}$ be the divisorial Zariski decomposition of $\alpha$. By Lemma \ref{nonK} $D\cap \Amp(\alpha)\neq \emptyset$ if and only if $D \cap \Amp(Z(\alpha))\neq \emptyset$. \\
%To see this one has to check that $D\cap \Amp(Z(\alpha)) \neq \emptyset$ implies $D\cap \Amp(\alpha) \neq \emptyset$. Indeed, assume that $D\subset E_{nk}(\alpha) $, then by \cite{Bou2}[Theorem 3.17] there exists a K\"ahler current with analytic singularites $T\in \alpha$ such that $\nu(T, D)>0$ but on the other side $\nu(\alpha,D)=\nu(T_{\min},D)=0$ since $D\not\subset E_{nk}(Z(\alpha))$ and so it does not belong to the negative part $N(\alpha)$. Observe that $T$ has full Monge-Amp\`ere class in $\alpha$ and so using Theorem \ref{lelongNumber} we get a contraddiction.\\ 
If the theorem is true for modified nef and big classes, we have
$$\vol(\alpha+tD)\geq \vol(Z(\alpha)+tD)>\vol(Z(\alpha))=\vol(\alpha).$$
Thus we can assume that $\alpha$ is a modified nef and big class. Assume by contradiction that there exists $t_0$ such that $\vol(\alpha+t_0 D)=\vol(\alpha)$. It follows by Theorem \ref{divisorialZariski} that $\beta=\alpha+t_0 D$ is the divisorial Zariski decomposition of $\beta$ and so $D\subset E_{nk}(\alpha)$ Property \ref{properties}(5). Since $E_{nk}(\alpha)=X\setminus \Amp(\alpha)$ \cite[Proposition 3.17]{Bou2} we get a contradiction.
\vspace{2mm}

Viceversa, if $\alpha=Z(\alpha)+\{N(\alpha)\}$ is the divisorial Zariski decomposition of $\alpha$ and $D\cap\Amp(\alpha)=\emptyset$ (or equivalently $D\subset E_{nk}(Z(\alpha))$ by Lemma \ref{nonK} below and \cite[Theorem 3.17]{Bou2}) then by Property \ref{properties}(5) we have that, for any $t>0$, the divisorial Zariski decomposition of $\alpha+tD$ is
$$\alpha+tD=Z(\alpha)+(N(\alpha)+tD)$$ thus $\vol(\alpha+tD)=\vol(Z(\alpha))=\vol(\alpha)$.
\end{proof}

\begin{lem}\label{nonK}
Let $\alpha\in H^{1,1}_{big}(X,\R)$ and let $\alpha=Z(\alpha)+\{N(\alpha)\}$ be its divisorial Zariski decomposition. Then we have
$$\Amp(\alpha)=\Amp(Z(\alpha))$$
\end{lem}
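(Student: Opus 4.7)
The plan is to reformulate the statement via Boucksom's identity $\Amp(\cdot)=X\setminus E_{nk}(\cdot)$, which is recalled in Section 1, so the claim becomes the set-theoretic equality $E_{nk}(\alpha)=E_{nk}(Z(\alpha))$. I would work with the description $E_{nk}(\beta)=\bigcap_T E_+(T)$ taken over all K\"ahler currents $T\in\beta$, and move K\"ahler currents back and forth between the classes $\alpha$ and $Z(\alpha)$ by adding or subtracting the effective divisor $[N(\alpha)]$.

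The first step is to check that $T\in\alpha$ is a K\"ahler current if and only if $T-[N(\alpha)]\in Z(\alpha)$ is a K\"ahler current (and symmetrically that $S\in Z(\alpha)$ is K\"ahler iff $S+[N(\alpha)]\in\alpha$ is). The ``$+[N(\alpha)]$'' direction is immediate since $[N(\alpha)]$ is a positive closed current. For the reverse direction, any positive current $T\in\alpha$ satisfies $\nu(T,D)\geq\nu(\alpha,D)$ for every prime divisor $D$, because a current with minimal singularities $T_{\min}\in\alpha$ is less singular than $T$ and $\nu(\alpha,D)=\nu(T_{\min},D)$ by Proposition \ref{bou6ii}. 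Applying the Siu decomposition to $T$ then yields $T-[N(\alpha)]\geq 0$, and applying the same argument to $T-\varepsilon\omega$ (whose Lelong numbers along every prime divisor coincide with those of $T$) upgrades this to $T-[N(\alpha)]\geq\varepsilon\omega$, transferring the K\"ahler bound.

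The main obstacle, as I see it, is the inclusion $\supp N(\alpha)\subseteq E_{nk}(Z(\alpha))$: the class $Z(\alpha)$ is only modified nef, so there is no a priori reason for its non-K\"ahler locus to contain any given divisor. The way around is Property \ref{properties}(5) applied to the modified nef and big class $Z(\alpha)$. Writing $\{N(\alpha)\}=\sum_j\lambda_j\{D_j\}$ with $\lambda_j=\nu(\alpha,D_j)>0$, one has $Z(\alpha)+\sum_j\lambda_j\{D_j\}=\alpha$, and by uniqueness of the divisorial Zariski decomposition $N\bigl(Z(\alpha)+\sum_j\lambda_j\{D_j\}\bigr)=N(\alpha)=\sum_j\lambda_j[D_j]$. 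Property \ref{properties}(5) therefore forces $D_j\subseteq E_{nk}(Z(\alpha))$ for every $j$.

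With these two ingredients and the additivity of Lelong numbers on sums of positive currents, both inclusions of $E_{nk}(\alpha)=E_{nk}(Z(\alpha))$ follow cleanly. Given $x\in E_{nk}(Z(\alpha))$ and any K\"ahler current $T\in\alpha$, the first step gives that $T-[N(\alpha)]$ is K\"ahler in $Z(\alpha)$, hence $\nu(T-[N(\alpha)],x)>0$ and a fortiori $\nu(T,x)>0$. Conversely, given $x\in E_{nk}(\alpha)$ and any K\"ahler current $S\in Z(\alpha)$, the current $S+[N(\alpha)]$ is K\"ahler in $\alpha$, so $\nu(S,x)+\nu([N(\alpha)],x)>0$; either $\nu(S,x)>0$ directly, or $x\in\supp N(\alpha)\subseteq E_{nk}(Z(\alpha))$ by the previous step, in which case $\nu(S,x)>0$ by definition of $E_{nk}(Z(\alpha))$. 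This completes the proof.
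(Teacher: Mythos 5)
Your proof is correct and follows essentially the same route as the paper: both transfer K\"ahler currents between $\alpha$ and $Z(\alpha)$ by adding or subtracting $N(\alpha)$ (positivity of the subtraction coming from the Lelong-number comparison $\nu(T,D)\geq\nu(\alpha,D)$ via Proposition \ref{bou6ii} together with the Siu decomposition), and both invoke Property \ref{properties}(5) to place $\supp N(\alpha)$ inside $E_{nk}(Z(\alpha))$. The only difference is cosmetic: you phrase the statement through the complementary loci $E_{nk}$ and arbitrary K\"ahler currents, while the paper argues directly from the definition of $\Amp(\cdot)$, using a K\"ahler current with analytic singularities smooth near the given point.
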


\begin{proof}
%We first show the inclusion $\Amp(\alpha)\subset \Amp(Z(\alpha))$, or equivalently that $E_{nk}(Z(\alpha))\subset E_{nk}(\alpha)$. Pick $x\in E_{nk}(Z(\alpha))$, by definition there exists a K\"ahler current with analytc singularities $T\in Z(\alpha)$ such that $\nu(T,x)>0$. Then $\nu(T+N(\alpha), x)>0$, hence $x\in E_{nk}(\alpha)$.\\
We first show the inclusion $\Amp(\alpha)\subset \Amp(Z(\alpha))$. Pick $x\in\Amp(\alpha)$. By definition there exists a K\"ahler current with analytic singularities $T\in\alpha$ which is smooth in a neighbourhood of $x$. Moreover $\nu(T_{\min}, x)=0$ since $0=\nu(T,x)\geq \nu(T_{\min},x)$. Let $T=R+\sum_j a_j [D_j]$ be the Siu decomposition of $T$, then $x\notin \supp D_j$ for any $j$. The current $T-N(\alpha)\in Z(\alpha)$ has clearly analytic singularities, is smooth around $x$ and it is also K\"ahler since $N(\alpha)\leq \sum_j a_j [D_j]$ by Proposition \ref{bou6ii}. Hence $x\in \Amp(Z(\alpha))$. 
Conversely, pick $x\in \Amp(Z(\alpha))$, then there exists a K\"ahler current with analytic singularities $T\in Z(\alpha)$ that is smooth in a neighbourhood of $x$ (see Definition \ref{ample}). It follows from Property \ref{properties}(5) that $x\notin \supp N(\alpha)$. This implies that $T+N(\alpha)\in \alpha$ is a K\"ahler current with analytic singularites that is smooth in a  neighbourhood of $x$. Hence $x\in \Amp(\alpha)$. 
\end{proof}

\end{document}